\newtheorem{theorem}{Theorem}[section]
\newtheorem{definition}[theorem]{Definition}
\newtheorem{remark}{Remark}[section]
\newcommand{\Con}{{\mbox{C}}}
\newcommand{\R}{{\mathbb R}}
\title{Classical solvability of nonlinear initial-boundary problems
for first-order hyperbolic systems}
\author{
I.~Kmit 
\\
{\small
Institute for Applied Problems of Mechanics and Mathematics,}\\
{\small Ukrainian Academy of Sciences}\\
{\small Naukova St.\ 3b,
79060 Lviv,
Ukraine}
\\
{\small   E-mail:
{\tt kmit@informatik.hu-berlin.de}}
}
\date{}
\begin{document}

\maketitle

\begin{abstract}
We prove the global classical solvability of initial-boundary 
problems for semilinear
first-order hyperbolic systems subjected to local and nonlocal nonlinear
boundary conditions. We also establish lower bounds
for the order of nonlinearity demarkating a frontier between
regular cases (classical solvability) and singular  cases
(blow-up of solutions).
\end{abstract}

\section{Introduction}	

\noindent
We study existence, uniqueness, and continuous dependence on initial data
 of classical solutions to initial-boundary problems for semilinear hyperbolic
systems with nonlinear nonlocal boundary conditions.
Specifically,
in the domain $\Pi=\{(x,t)\,|\,0<x<1$, $t>0\}$
we address the following problem:
\begin{eqnarray}
(\partial_t  + \Lambda(x,t)\partial_x)
 u = f(x,t,u), \qquad\ (x,t)\in \Pi\ \quad\quad\label{eq:1}&&\\[2mm]
u(x,0) = \varphi(x), \qquad\qquad\qquad\qquad\quad\ \ \, x\in (0,1)\quad\label{eq:2}&&\\[1mm]
\begin{array}{ll}
u_i(0,t) = h_i(t,v(t)), & k+1\le i\le n, \ \  t\in(0,\infty)\\
u_i(1,t) = h_i(t,v(t)), & 1\le i\le k, \quad\quad\ \, t\in(0,\infty)
\end{array}&&\label{eq:3}
\end{eqnarray}
where  $u$, $f$, and $\varphi$  are real $n$-vectors,
$\Lambda=diag(\lambda_1,\dots,\lambda_n)$ is a diagonal matrix,
$\lambda_1,\dots,\lambda_k<0$, $\lambda_{k+1},\dots,\lambda_n>0$
for some $1\le k\le n$, and
$v(t)=(u_1(0,t),\dots,$
$u_{k}(0,t),u_{k+1}(1,t),\dots,u_{n}(1,t))$.
Note that the  system~(\ref{eq:1}) is non-strictly
hyperbolic and the boundary of $\Pi$ is non-characteristic.
We will denote $h=(h_1,\dots,h_n)$.

Special cases of~(\ref{eq:1})--(\ref{eq:3}) arise in
laser dynamics 
%\cite{r1,rw,sbw} 
(Jochmann and Recke, 1999; Radziunas et al., 2000; Sieber, Recke and Schneider,
2004)
and chemical kinetics
%\cite{lyulko,zelenjak}
(Zelenjak, 1966; Lyul'ko, 2002).

We establish a global existence-uniqueness classical result
for the problem~(\ref{eq:1})--(\ref{eq:3}).
Its novelty consists in allowing nonlinear local and nonlocal
boundary conditions and in allowing non-Lipschitz nonlinearities
in~(\ref{eq:1}) and~(\ref{eq:3}).
Namely, either the functions $f$ and $h$ can be both
non-Lipschitz with 
$\|f\|=O(\|u\|\log^{1/4}\log\|u\|)$ and
$\|g\|=O(\|v\|\log^{1/4}\log\|v\|)$ or
$f$ can be non-Lipschitz with 
$\|f\|=O(\|u\|\log\log\|u\|)$ while 
$g$ in this case should be Lipschitz
(a more detailed description is given in Section \ref{s:3},
see also Remark \ref{rem}).
Here and below by $\|\cdot\|$ we denote the Euclidian norm in $\R^n$.
It turns out that, in these conditions,
solutions to (\ref{eq:1})--(\ref{eq:3}) have the same
qualitative behaviour as in the linear case.

Our result should be contrasted to the known fact (see, e.g., 
%\cite{alinhac,LT}
Li Ta-tsien, 1994; Alinhac, 1995)
that  if the right hand side   $f(x,t,u)$
of (\ref{eq:1})  has at least quadratic growth in $u$, then
classical solutions to (\ref{eq:1})--(\ref{eq:3}) in general fail to exist
globally in time. More precisely, they blow-up in a finite time,
creating singularities.
Thus, we show that a ``frontier'' between growth rates of $f$
ensuring regular behavior of the system and causing singular behavior
lies somewhere between $\|u\|\log\log\|u\|$ and $\|u\|^2$.
An intriguing problem is to make the gap closer.

For different aspects of the subject we refer the reader to
%sources~\cite{survey,myshkisfilim,ITSF,serbacad}.
Myshkis and Filimonov (1981, 2003) and to Kmit (2006a, 2006b).
%\cite{myshkisfilim} 
Myshkis and Filimonov (1981) investigate problems
with nonlinear boundary conditions, but only of the {\it local} type.
In contrast to this,
our case includes nonlinear {\it nonlocal} boundary conditions.
Furthermore, Myshkis and Filimonov (1981) 
%\cite{myshkisfilim} 
make an essential assumption on nonlinearities, namely, that
$f$ and $h$ are globally Lipshitz in, respectively, $u$ and $v$. This means that
$f$ (resp., $g$) admits no more than linear growth in $u$ (resp., $v$)
as $\|u\|\to\infty$ (resp.,  $\|v\|\to\infty$). 

%\cite{ITSF,serbacad} 
Kmit (2006a, 2006b) considers the problem~(\ref{eq:1})--(\ref{eq:3}) admitting
strong singularities both in the differential equations and in the
initial-boundary conditions. The author proves a general existence-uniqueness 
result in the Colombeau algebra of generalized functions and derives asymptotic
estimates for generalized solutions.

\section{The Case of  Lipschitz Nonlinearities:
Existence, Uniqueness, and Continuous Dependence}\label{s:2}

\noindent
If the initial data of the problem~(\ref{eq:1})--(\ref{eq:3}) are
sufficiently smooth, then
the zero-order and the first-order compatibility conditions
between~(\ref{eq:1}) and~(\ref{eq:2}) are given by equalities
\begin{equation}\label{eq:c1}
\begin{array}{lccr}
\varphi_i(0)&=&h_i(0,v(0)),&\quad k+1\le i\le n,\\
\varphi_i(1)&=&h_i(0,v(0)),&\quad 1\le i\le k,
\end{array}
\end{equation}
and
\begin{equation}\label{eq:c2}
\begin{array}{l}
f_i(0,0,\varphi(0))-\lambda_i(0,0)\varphi_i'(0)=
\partial_th_i(0,v(0))\\
\qquad\qquad\quad+\nabla_vh_i(0,v(0))\cdot v'(0),\quad k+1\le i\le n;\\[2mm]
f_i(1,0,\varphi(1))-\lambda_i(1,0)\varphi_i'(1)=
\partial_th_i(0,v(0))\\
\qquad\qquad\quad+\nabla_vh_i(0,v(0))\cdot v'(0),\quad 1\le i\le k,
\end{array}
\end{equation}
where
\begin{eqnarray*}
v(0)&=&\Bigl(\varphi_1(0),\dots,\varphi_k(0),\varphi_{k+1}(1),\dots,
\varphi_{n}(1)\Bigr),
\\[1mm]
v'(0)&=&\Bigl(f_1(0,0,\varphi(0))-\lambda_1(0,0)\varphi_1'(0),\dots,\\&&
f_k(0,0,\varphi(0))-\lambda_k(0,0)\varphi_k'(0),\\&&
f_{k+1}(1,0,\varphi(1))-\lambda_{k+1}(1,0)\varphi_{k+1}'(1),\\&&
\dots,f_n(1,0,\varphi(1))-\lambda_n(1,0)\varphi_n'(1)\Bigr),
\end{eqnarray*}
and $"\cdot"$ denotes the scalar product in $\R^n$.

\begin{theorem}\label{thm:classical}
 Assume that the initial data $\lambda_i$ and $f_i$ are continuous,
$\lambda_i$ and $\varphi_i$ are $\Con^1$-smooth in $x$, $f_i$ are
$\Con^1$-smooth in $x$ and $u$, $h_i$ are $\Con^1$-smooth in both arguments.
Let  $\nabla_yf(x,t,y)$ be bounded on $K\times\R^n$ for every
compact  $K\subset\overline{\Pi}$ and
 $\nabla_zh(t,z)$ be
bounded on $K\times\R^n$
for every compact $K\subset[0,\infty)$. If the zero-order and the first-order
compatibility conditions~(\ref{eq:c1}) and~(\ref{eq:c2}) are
fulfilled, then
the problem~(\ref{eq:1})--(\ref{eq:3}) has a unique classical solution in
$\Pi$.
\end{theorem}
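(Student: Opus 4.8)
The plan is to recast the mixed problem \eqref{eq:1}--\eqref{eq:3} as an equivalent system of integral equations along characteristics, and then to solve that system by a contraction argument, using the bounded-gradient hypotheses to supply the a~priori bounds that rule out blow-up and so give a solution on all of $\Pi$.

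First I would, for each $i$, define the characteristic $\tau\mapsto\omega_i(\tau;x,t)$ by the ODE $\partial_\tau\omega_i=\lambda_i(\omega_i,\tau)$ with $\omega_i(t;x,t)=x$; since $\lambda_i$ is continuous and $\Con^1$ in $x$, these curves are well defined and $\Con^1$ in all arguments, and because each $\lambda_i$ keeps a fixed sign, every forward characteristic either meets the initial line $t=0$ inside the strip or first exits through the lateral side carrying the datum for the $i$-th component ($x=0$ for $k+1\le i\le n$, $x=1$ for $1\le i\le k$). Writing $\theta_i(x,t)$ for the entry time, integration of the $i$-th scalar equation along its characteristic converts \eqref{eq:1}--\eqref{eq:3} into
\[
u_i(x,t)=r_i(x,t)+\int_{\theta_i(x,t)}^{t} f_i\bigl(\omega_i(\tau;x,t),\tau,u(\omega_i(\tau;x,t),\tau)\bigr)\,d\tau,
\]
where $r_i(x,t)=\varphi_i(\omega_i(0;x,t))$ when the characteristic reaches $t=0$, and $r_i(x,t)=h_i\bigl(\theta_i,v(\theta_i)\bigr)$ when it reaches a lateral boundary. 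The nonlocality enters only through the traces collected in $v$, and one checks that $v$ consists precisely of the \emph{outgoing} boundary values, so the incoming data prescribed by \eqref{eq:3} never feed back into themselves instantaneously.

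Second, I would establish existence and uniqueness by a fixed-point argument on strips $\Pi_T=\Pi\cap\{t<T\}$ whose height $T$ is smaller than the minimal crossing time $c=\bigl(\sup_i\sup_{\overline{\Pi_T}}|\lambda_i|\bigr)^{-1}$. The point of this smallness is finite propagation: within such a strip, a characteristic meeting a lateral side at some time $\theta$ has, going backward, no time to reach the opposite side, so it must originate on the initial line; hence each boundary value $h_i(\theta,v(\theta))$ unfolds into initial data plus integral terms only, with no boundary-to-boundary recursion. The hypotheses that $\nabla_yf$ and $\nabla_zh$ are bounded on $K\times\R^n$ make $f$ and $h$ globally Lipschitz in $u$ and in $v$ with constants uniform over $\overline{\Pi_T}$; together with the finite-propagation structure and the $\Con^1$ dependence of $\omega_i,\theta_i$ on $(x,t)$, this renders the integral operator a contraction on $C(\overline{\Pi_T},\R^n)$ with constant $O(T)$, so the Banach theorem gives a unique continuous solution on $\Pi_T$. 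To reach all of $\Pi$ I would then take the Euclidean norm in the representation and apply Gronwall's inequality; the uniform Lipschitz (hence linear-growth) control bounds $\sup_x\|u(\cdot,t)\|$ by a quantity that is finite on every finite interval, so restarting the strip construction from $t=T$, now feeding the already-constructed traces into the boundary references reaching into earlier strips, and iterating, one never exhausts a finite time and obtains a unique continuous solution on $\Pi$.

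Finally, to upgrade continuity to classical ($\Con^1$) regularity I would differentiate the integral representation: $\partial_xu$ and $\partial_tu$ satisfy integral equations of the same type, solvable by the same contraction, hence exist and are continuous off the two characteristics issuing from the corners $(0,0)$ and $(1,0)$. Across these separating characteristics, continuity of $u$ is exactly the zero-order compatibility \eqref{eq:c1}, and continuity of the first derivatives is exactly the first-order compatibility \eqref{eq:c2}, with the chain-rule identities for $v'(0)$ recorded before the theorem. The main obstacle is precisely this matching at the corners: one must check that propagating the nonlocal boundary values $h_i(\tau,v(\tau))$ along characteristics preserves the $\Con^1$-fit, and it is here that \eqref{eq:c1}--\eqref{eq:c2} are used in full, while elsewhere the construction reduces to the routine characteristic/contraction scheme.
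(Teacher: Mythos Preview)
Your proposal is correct and follows essentially the same route as the paper: rewrite the problem as an integral system along characteristics, solve it by a contraction argument on short time strips (short enough that characteristics reaching a lateral side trace back to the initial line, and short enough that the Lipschitz constants furnished by the bounded-gradient hypotheses make the operator a contraction), iterate to cover any finite~$T$, then repeat the scheme for the differentiated system to obtain $\Con^1$ regularity, invoking the compatibility conditions~\eqref{eq:c1}--\eqref{eq:c2} to match across the corner characteristics. The only notable difference is bookkeeping: the paper tracks the iteration explicitly and records quantitative a~priori bounds of the form $(3+2nL_h)^{\lceil T/\theta_0\rceil}\Phi$ for $u$ and an analogous one for $\partial_x u$, which it needs later for the non-Lipschitz extension, whereas you appeal to a Gronwall-type argument; for the purposes of Theorem~\ref{thm:classical} alone this makes no material difference.
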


\begin{proof}
An equivalent integral-operator representation of~(\ref{eq:1})--(\ref{eq:3})
can be written in the form
\begin{eqnarray}
\lefteqn{u_i(x,t) = (R_iu)(x,t)}\nonumber\\&&
+
\int\limits_{t_i(x,t)}^t\biggl[u(\omega_i(\tau;x,t),\tau)\cdot
\int\limits_0^1\nabla_uf_i(\omega_i(\tau;x,t),\tau,\sigma u)\,d\sigma
\nonumber\\&&
+f_i(\omega_i(\tau;x,t),\tau,0)\biggr]\,d\tau,\qquad
1\le i\le n,\label{eq:integral}
\end{eqnarray}
where
\begin{multline*}
\shoveright{\,\,(R_iu)(x,t) = \varphi_{i}(\omega_i(0;x,t))}
\end{multline*}
if $t_i(x,t)=0$ and
\begin{eqnarray*}
\lefteqn{(R_iu)(x,t)=}\\&&
=
v(t_i(x,t))\cdot\int\limits_0^1\nabla_v
h_i(t_i(x,t),\sigma v)\,d\sigma+h_i(t_i(x,t),0)
\end{eqnarray*}
otherwise. Here
$\omega_i(\tau;x,t)$ denotes the $i$-th characteristic of~(\ref{eq:1})
passing through $(x,t)\in\overline{\Pi}$ and  $t_i(x,t)$  denotes
the smallest value of
$\tau\ge 0$ at which
$\xi=\omega_i(\tau;x,t)$ riches $\partial\Pi$.
Given $T>0$, denote
$$
\Pi^T=\{(x,t)\,|\,0<x<1, 0<t<T\}.
$$
It suffices to prove the theorem in $\Pi^T$ for an arbitrarily fixed $T>0$.
Let  $L_f$ be a Lipshitz constant of $f_i(x,t,u)$ in $u$
which is uniform in $i\le n$ and $(x,t)\in\overline{\Pi^T}$,
$L_h$ be a Lipschitz constant of $h_i(t,v)$ in $v$
which is uniform in $i\le n$ and $t\in[0,T]$.

We split our argument into two claims.
In parallel we will derive global a
priori estimates, which will be used in the next section.

{\it Claim 1. (\ref{eq:integral}) has  a unique  continuous
solution in $\overline{\Pi^T}$.}
We  first prove that there exists a unique solution
$u\in(C(\overline{\Pi^{\theta_0}}))^n$ to~(\ref{eq:integral})
for some   $\theta_0>0$ such that

\begin{equation}\label{eq:22_0}
\omega_i(t;0,\tau)<\omega_j(t;1,\tau)
\end{equation}
for all
$$
\tau\ge 0,  t\in[\tau,\tau+\theta_m],
 k+1\le i\le n, 1\le j\le k,
$$
where $m=0$.
For $t\in[0,\theta_0]$ we can express
$v(t)$ in the form

\begin{eqnarray}
\lefteqn{v_i(t)=\varphi_{i}(\omega_i(0;x_i,t))}\nonumber\\&&
+
\int\limits_{0}^t\biggl[u(\omega_i(\tau;x_i,t),\tau)\cdot\int\limits_0^1\nabla_u
f_i(\omega_i(\tau;x_i,t),\tau,\sigma u)\,d\sigma\nonumber
\\
&&+f_i(\omega_i(\tau;x_i,t),\tau,0)\biggr]\,d\tau,\qquad
1\le i\le n,\label{eq:C}
\end{eqnarray}
where $x_i=0$ for $1\le i\le k$ and $x_i=1$ for $k+1\le i\le n$.

\medskip

{\it Convention.}
In the maximization operators below, unless their range is explicitly
specified, we assume the following:

\hangafter=0
\hangindent=\parindent
\noindent
the range of $i,x$ is $i\le n$, $x\in[0,1]$;\\
the range of $i,t$ is $i\le n$, $t\in[0,T]$;\\
the range of $i,x,t$ is $i\le n$, $x\in[0,1]$, $t\in[0,T]$;\\
the range of $i,t,z$ is $i\le n$, $t\in[0,T]$, $\|z\|\le M$;\\
the range of $i,x,t,y$ is $i\le n$, $x\in[0,1]$, $t\in[0,T]$, $\|y\|\le M$,
where a constant $M$ will be specified later.

\medskip

Apply the contraction mapping principle to~(\ref{eq:integral}).
Applying the operator defined by the right hand side of (\ref{eq:integral})
to continuous functions $u^1$ and $u^2$ and
considering the difference  $u^1-u^2$ in
$\overline{\Pi^{\theta_0}}$, we get
$$
\max\limits_{i\le n;(x,t)\in\overline{\Pi^{\theta_0}}}|u_i^1-u_i^2|\le
\theta_0q_0\max\limits_{i\le n;(x,t)\in\overline{\Pi^{\theta_0}}}|u_i^1-u_i^2|,
$$
where
\begin{eqnarray*}
q_0=nL_f(1+nL_h).
\end{eqnarray*}
Choose
$$
\theta_0=\left(2q_0\right)^{-1}.
$$
This proves the
existence and uniqueness of a $(C(\overline{\Pi^{\theta_0}}))^n$-solution $u$,
satisfying
the following
local a priori estimate:
\begin{equation}\label{eq:24}
%\begin{array}{ccccc}
%\displaystyle
\max\limits_{i\le n;(x,t)\in\overline{\Pi^{\theta_0}}}|u_i|\le
2\left(1+nL_h\right)\Phi,
\end{equation}
where
\begin{equation}\label{eq:phi}
\Phi=\max\limits_{i,x}|\varphi_i(x)|+T\max\limits_{i,x,t}|f_i(x,t,0)|
+\max\limits_{i,t}|h_i(t,0)|.
%\end{array}
\end{equation}
Note that  the value of $q_0$ depends on $T$ and does not depend on $\theta_0$.
This allows us to complete the proof of the claim
in $\lceil T/\theta_0\rceil$ steps,
iterating the local existence-uniqueness result in domains
$
(\Pi^{j\theta_0}\cap\Pi^T)\setminus\overline{\Pi^{(j-1)\theta_0}}
$,
where
$
j\le \lceil T/\theta_0\rceil.
$
Simultaneously we arrive at the global a priori estimate
\begin{equation}\label{eq:u}
\max\limits_{i,x,t}|u_i|\le
\left(3+2nL_h\right)^{\lceil T/\theta_0\rceil}\Phi.
\end{equation}

{\it Claim 2. (\ref{eq:1})--(\ref{eq:3}) has  a unique
$\Con^1$-solution in $\Pi^T$.}
We start with a problem for $\partial_xu$:
\begin{eqnarray}
\lefteqn{\partial_xu_i(x,t) = (R_{ix}'u)(x,t)}\nonumber\\&&
\displaystyle
\!\!\!\!+\int\limits_{t_i(x,t)}^t\biggl[\nabla_uf_i(\xi,\tau,u)\cdot\partial_\xi u(\xi,\tau)
-\partial_\xi\lambda_i(\xi,\tau)\partial_\xi u_i(\xi,\tau)\nonumber\\&&
\!\!\!\!+\left(\partial_\xi f_i\right)(\xi,\tau,u)\biggr]
\bigg|_{\xi=\omega_i(\tau;x,t)}
\,d\tau,\qquad 1\le i\le n,\label{eq:U_x}
\end{eqnarray}
where
\begin{multline*}
\shoveright{\ (R_{ix}'u)(x,t)=\varphi_{i}'(\omega_i(0;x,t))}
\end{multline*}
if $t_i(x,t)=0$ and
\begin{multline*}
\ (R_{ix}'u)(x,t)=
\lambda_i^{-1}(y_i,\tau)
\Bigl[
f_i(y_i,\tau,u)\\
-\nabla_vh_i(\tau,v)\cdot v'(\tau)-
(\partial_th_i)(\tau,v)\Bigr]\Big|_{\tau=t_i(x,t)}
\end{multline*}
otherwise. Here
%and in the formula~(\ref{eq:v'}) below
 $y_i=0$ for $k+1\le i\le n$ and
$y_i=1$ for $1\le i\le k$.
Let us show that there is a unique solution $\partial_xu\in\Con\left(\overline{\Pi^{\theta_1}}\right)$ 
to~(\ref{eq:U_x}) for some $\theta_1$ satisfying the condition~(\ref{eq:22_0}) with $m=1$.
Combining~(\ref{eq:1}) with~(\ref{eq:U_x}) for  $t\in[0,\theta_1]$,  we get
\begin{eqnarray}
\lefteqn{v_i'(t)=f_i(x_i,t,u)-\lambda_i(x_i,t)\partial_xu_i(x_i,t)}\nonumber\\&&
=f_i(x_i,t,u)-\lambda_i(x_i,t)\biggl[\varphi_{i}'(\omega_i(0;x_i,t))\nonumber\\&&
\displaystyle
+
\int\limits_{0}^t\Bigl[\nabla_uf_i(\xi,\tau,u)\cdot\partial_\xi u(\xi,\tau)
-\partial_\xi\lambda_i(\xi,\tau)\partial_\xi u_i(\xi,\tau)\nonumber\\&&
+\left(\partial_\xi f_i\right)(\xi,\tau,u)\Bigr]
\Big|_{\xi=\omega_i(\tau;x_i,t)}
\,d\tau\biggr],\ \ 1\le i\le n.\label{eq:v'}
\end{eqnarray}
%and similarly  for $k+1\le i\le n$.
Using the fact that $u$ is a known continuous function (see Claim~1),
we now  apply the operator defined by the right hand side of~(\ref{eq:U_x})
to  continuous functions $\partial_xu^1$ and $\partial_xu^2$.
Notice the estimate
$$
\max\limits_{i\le n;(x,t)\in\overline{\Pi^{\theta_1}}}
|\partial_xu_i^1-\partial_xu_i^2|\le
\theta_1q_1\max\limits_{i\le n;(x,t)\in\overline{\Pi^{\theta_1}}}
|\partial_xu_i^1-\partial_xu_i^2|,
$$
where
$$
q_1=\left(nL_f+\max\limits_{i,x,t}
|\partial_x\lambda_i|\right)
\left(1+nL_h\max\limits_{i,x,t}|\lambda_i|\max\limits_{i,x,t}|\lambda_i|^{-1}\right).
$$
Choose
$$
\theta_1=(2q_1)^{-1}.
$$
This shows that the operator
defined by the right hand side of (\ref{eq:U_x}) has the
contraction property with respect to the domain $\overline{\Pi^{\theta_1}}$
and proves the existence and the uniqueness of
$u\in \Con_{x,t}^{1,0}(\overline{\Pi^{\theta_1}})$.
%to the problem~(\ref{eq:integral}).
Furthermore,
%the following local a priori estimate:

%\vskip-6mm

\begin{equation}\label{eq:U1}
\max\limits_{i\le n;(x,t)\in\overline{\Pi^{\theta_1}}}|\partial_xu_i|
\le 2\left(1+nL_h
\max\limits_{i,x,t}|\lambda_i|\max\limits_{i,x,t}|\lambda_i|^{-1}
\right)\Psi,
\end{equation}
where
\begin{equation}\label{eq:Psi}
\begin{array}{ccc}
\displaystyle
\Psi=\max\limits_{i,x}|\varphi_{i}'|
+T\max\limits_{i,x,t,y}|\partial_xf_i|
\\[3mm]\displaystyle
+\max\limits_{i,x,t}|\lambda_i|^{-1}
\max\limits_{i,x,t,y}|f_i|
+\max\limits_{i,x,t}|\lambda_i|^{-1}
\max\limits_{i,t,z}|\partial_th_i|
\end{array}
\end{equation}
and the constant $M$ introduced above in Convention is now set up to
$$
M=n\left(3+2nL_h\right)^{\lceil T/\theta_0\rceil}\Phi
$$
(see the estimate~(\ref{eq:u})).
Note that $q_1$ depends on $T$ and does not on $\theta_1$.
To complete the proof of the claim, it hence remains to
iterate the local existence-uniqueness result in domains
$
(\Pi^{j\theta_1}\cap\Pi^T)\setminus\overline{\Pi^{(j-1)\theta_1}}
$,
where
$
j\le \lceil T/\theta_1\rceil.
$
This also gives us the global a priori estimate
\begin{equation}\label{eq:apr1}
\max\limits_{i,x,t}|\partial_xu_i|\le
\left(3+2nL_h
\max\limits_{i,x,t}|\lambda_i|\max\limits_{i,x,t}|\lambda_i|^{-1}\right)^{\lceil T/\theta_1\rceil}\Psi.
\end{equation}
The fact that
$u$ is a $\Con^1$-function in both arguments follows now from~(\ref{eq:1}).
Furthermore,
\begin{equation}\label{eq:apr11}
\max\limits_{i,x,t}|\partial_tu_i|\le \max\limits_{i,x,t,y}
|f_i|+\max\limits_{i,x,t}|\lambda_i|\max\limits_{i,x,t}|\partial_xu_i|,
\end{equation}
where $\partial_xu_i$ satisfy~(\ref{eq:apr1}). The claim is proved.

Since $T$ is arbitrary, the theorem follows.
\end{proof}

\begin{definition}\label{defn:continuous}
A continuous solution to the integral-operator system~(\ref{eq:integral})
is called a continuous solution to the problem~(\ref{eq:1})--(\ref{eq:3}).
\end{definition}

From the proof of Claim~1 (in the proof of Theorem \ref{thm:classical}) we obtain also
the following fact: If all the initial data
in (\ref{eq:1})--(\ref{eq:3}) are continuous functions and
$f_i$ and $h_i$ are globally Lipschitz, respectively, in $u$ and $v$,
then there is a unique continuous solution to~(\ref{eq:1})--(\ref{eq:3})
satisfying the global a priori estimate~(\ref{eq:u}).
This gives us the following continuous dependence theorem.

\begin{theorem}\label{thm:continuous}
 Assume that the initial data $\lambda_i$, $f_i$, $\varphi_i$, and $h_i$ are
continuous functions in their arguments and $\lambda_i$ are
Lipschitz in $x\in[0,1]$.
Let  $\nabla_yf(x,t,y)$ be bounded on $K\times\R^n$ for every
compact  $K\subset\overline{\Pi}$ and
 $\nabla_zh(t,z)$ be
bounded on $K\times\R^n$
for every compact $K\subset[0,\infty)$. Suppose that the zero-order
compatibility conditions~(\ref{eq:c1}) are
fulfilled. If $f(x,t,0)\equiv 0$ for all $(x,t)\in\overline\Pi$ and
$h(t,0)\equiv 0$ for all $t\in[0,\infty)$,
 then the continuous solution to the
problem~(\ref{eq:1})--(\ref{eq:3}) continuously depends on $\varphi(x)$.
\end{theorem}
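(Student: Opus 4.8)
The plan is to reduce continuous dependence to the global a~priori estimate~(\ref{eq:u}) already established in the proof of Theorem~\ref{thm:classical}, exploiting the normalizations $f(x,t,0)\equiv 0$ and $h(t,0)\equiv 0$. First I would record that, since $\nabla_yf$ and $\nabla_zh$ are bounded on $K\times\R^n$ for every compact $K$, the functions $f_i$ and $h_i$ are \emph{globally} Lipschitz in $u$ and $v$ respectively on each compact strip $\overline{\Pi^T}$ (integrate the gradient along the segment joining the two arguments). Hence the observation drawn from Claim~1 that precedes the present theorem applies: for each continuous $\varphi$ the problem~(\ref{eq:1})--(\ref{eq:3}) has a unique continuous solution obeying~(\ref{eq:u}). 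Moreover, because $f(x,t,0)\equiv 0$ and $h(t,0)\equiv 0$, the quantity $\Phi$ in~(\ref{eq:phi}) collapses to $\Phi=\max_{i,x}|\varphi_i(x)|$.

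Next, fix $T>0$ and two continuous initial data $\varphi^1,\varphi^2$ with corresponding continuous solutions $u^1,u^2$, write $v^1,v^2$ for the associated boundary-trace vectors, and set $w=u^1-u^2$. Subtracting the two copies of~(\ref{eq:1})--(\ref{eq:3}) shows that $w$ solves a problem of exactly the same form, with $\varphi$ replaced by $\varphi^1-\varphi^2$ and with $f,h$ replaced by
\[
\tilde f_i(x,t,w)=f_i(x,t,u^2(x,t)+w)-f_i(x,t,u^2(x,t)),\qquad
\tilde h_i(t,z)=h_i(t,v^2(t)+z)-h_i(t,v^2(t)).
\]
The transport structure, i.e.\ the characteristics $\omega_i$ and the hitting times $t_i$, depends only on $\Lambda$ and is therefore unchanged. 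I would then check that $\tilde f,\tilde h$ inherit all the standing hypotheses: they are continuous in every argument (using continuity of $u^2,v^2$), their gradients $\nabla_w\tilde f_i=\nabla_uf_i(x,t,u^2+w)$ and $\nabla_z\tilde h_i=\nabla_vh_i(t,v^2+z)$ are bounded with the \emph{same} constants $L_f,L_h$, and they vanish at the origin, $\tilde f(x,t,0)\equiv 0$ and $\tilde h(t,0)\equiv 0$.

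Consequently the same observation applies to the $w$-problem, and its a~priori estimate~(\ref{eq:u}) yields
\[
\max_{i,x,t}|w_i|\le\left(3+2nL_h\right)^{\lceil T/\theta_0\rceil}\max_{i,x}|\varphi_i^1(x)-\varphi_i^2(x)|,
\]
since for the $w$-problem the governing constant $\Phi$ reduces to $\max_{i,x}|\varphi^1_i-\varphi^2_i|$ while $q_0,\theta_0,L_f,L_h$ are unaffected by the shift in the $u$-argument. As $T$ was arbitrary, this is in fact Lipschitz dependence of the solution on $\varphi$ in the supremum norm, uniformly on every $\overline{\Pi^T}$, which contains the asserted continuous dependence as a special case.

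The only nonroutine point — and hence the step I would treat most carefully — is the assertion that $w$ genuinely satisfies the integral-operator system~(\ref{eq:integral}) associated with $(\tilde f,\tilde h,\varphi^1-\varphi^2)$, so that Definition~\ref{defn:continuous} and the estimate~(\ref{eq:u}) are legitimately available for it. This follows by subtracting the representations~(\ref{eq:integral}) for $u^1$ and $u^2$ and using the identity $f_i(x,t,u)-f_i(x,t,0)=u\cdot\int_0^1\nabla_uf_i(x,t,\sigma u)\,d\sigma$ together with $f_i(x,t,0)=0$ (and the analogous identity for $h_i$), which recasts both the nonlinear integrand and the operator $R_i$ in terms of $\tilde f_i$ and $\tilde h_i$ evaluated along the unchanged characteristics. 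One must verify that the difference quotients reassemble correctly under the integral sign, but no estimate beyond those already derived in Theorem~\ref{thm:classical} is required.
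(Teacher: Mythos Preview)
Your proposal is correct and follows the same route the paper intends: the paper's proof of Theorem~\ref{thm:continuous} is deliberately terse---it simply states that the a~priori estimate~(\ref{eq:u}) from Claim~1, together with $f(x,t,0)\equiv 0$ and $h(t,0)\equiv 0$, ``gives'' the continuous dependence---and your argument supplies exactly the missing details by passing to the difference $w=u^1-u^2$, recognizing it as a solution of a problem of the same type with shifted nonlinearities $\tilde f,\tilde h$ that vanish at the origin and retain the same Lipschitz constants, and then invoking~(\ref{eq:u}) for $w$. This difference-equation reduction is the standard way to extract continuous (indeed Lipschitz) dependence from such an estimate, and it parallels the uniqueness argument the paper spells out in the Remark following Theorem~\ref{thm:nonL}.
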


%The theorem is a straightforward consequence of Claim~1 (see Theorem~\ref{thm:classical}).

\section{The Case of  Non-Lipschitz Nonlinearities:
Existence and Uniqueness Result}\label{s:3}

\noindent
We here extend Theorem \ref{thm:classical} to the case of
non-Lipschitz nonlinearities in~(\ref{eq:1}) and~(\ref{eq:3}).
\begin{theorem}\label{thm:nonL}
 Assume that the initial data $\lambda_i$ and $f_i$ are continuous,
$\lambda_i$ and $\varphi_i$ are $\Con^1$-smooth in $x$, $f_i$ are
$\Con^1$-smooth in $x$ and $u$, $h_i$ are $\Con^1$-smooth in both arguments.
Suppose that for each $T>0$ there exist $C_f>0$ and $C_h>0$ such that
\begin{equation}\label{eq:f}
\left\|\nabla_yf(x,t,y)\right\|\le C_f\left(\log\log F(x,t,\|y\|)\right)^{1/4},
\end{equation}
\begin{equation}\label{eq:h}
\left\|\nabla_zh(t,z)\right\|\le C_h\left(\log\log H(t,\|z\|)\right)^{1/4},
\end{equation}
where $F$ (resp., $H$) is a polynomial in $\|y\|$ (resp., in $\|z\|$) with
coefficients in $\Con^1\left(\overline{\Pi^T}\right)$
(resp., in $\Con^1[0,T]$).
If the zero-order and the first-order
compatibility conditions~(\ref{eq:c1}) and~(\ref{eq:c2}) are
fulfilled, then  the problem~(\ref{eq:1})--(\ref{eq:3}) has a unique
classical solution in $\Pi$.
\end{theorem}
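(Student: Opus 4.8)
The plan is to bootstrap from Theorem~\ref{thm:classical} by an amplitude-truncation argument, so that the only genuinely new work is a global a~priori bound ruling out finite-time blow-up; the growth restrictions (\ref{eq:f})--(\ref{eq:h}) will enter exactly through an Osgood-type non-blow-up test. Fix $T>0$. For $R>0$ I would modify $f$ and $h$ outside the ball of radius $R$: set $f^R(x,t,y)=f(x,t,\beta_R(y))$ and $h^R(t,z)=h(t,\beta_R(z))$, where $\beta_R$ is a $\Con^1$ radial truncation with $\beta_R(y)=y$ for $\|y\|\le R$, $\|\beta_R\|\le 2R$, and $\|\nabla\beta_R\|\le 2$. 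By (\ref{eq:f})--(\ref{eq:h}) the gradients $\nabla_yf^R,\nabla_zh^R$ are then \emph{bounded} (by $2C_f(\log\log F(\cdot,2R))^{1/4}$ and $2C_h(\log\log H(\cdot,2R))^{1/4}$), so $f^R,h^R$ are globally Lipschitz and meet the hypotheses of Theorem~\ref{thm:classical}. For $R$ larger than $\|\varphi\|_\infty$ and $\|v(0)\|$ the truncation leaves the data untouched, the compatibility conditions (\ref{eq:c1})--(\ref{eq:c2}) persist, and Theorem~\ref{thm:classical} yields a unique classical solution $u^R$ of the truncated problem on all of $\Pi^T$.

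The heart of the matter is to show that $u^R$ obeys a bound $\|u^R(x,t)\|\le M_T$ on $\Pi^T$ with $M_T$ depending only on $T$ and the data, \emph{not} on $R$. Granting this, choosing $R>M_T$ forces $\|u^R\|\le R$ throughout $\Pi^T$, where $f^R=f$, $h^R=h$, so $u^R$ solves the original problem; uniqueness follows as in Claim~1 of Theorem~\ref{thm:classical}, since any classical solution satisfies the same a~priori bound and hence lies in $\{\|u\|\le M_T\}$, where $f,h$ are Lipschitz and Gronwall applies; as $T$ is arbitrary we obtain a solution on $\Pi$. To produce $M_T$ I would run the integral representation (\ref{eq:integral}) for $u^R$: on $\{\|u^R\|\le R\}$ the integrand norm is controlled by (\ref{eq:f}) as $\|u\|\,C_f(\log\log F)^{1/4}+C$, and the reflection term $R_iu$ by $\|v\|\,C_h(\log\log H)^{1/4}+C$. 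With $U(t)=\sup_{x,\,s\le t}\|u^R(x,s)\|$ this gives a nonlinear integral inequality
\begin{equation*}
U(t)\le C_0+\int_0^t\Omega\bigl(U(s)\bigr)\,ds,\qquad \Omega(r)\sim r\,(\log\log r)^{1/2},
\end{equation*}
the exponent $1/2$ arising from the product of the $f$- and $h$-rates --- precisely the product $L_f(1+nL_h)$ that formed $q_0$ in Theorem~\ref{thm:classical}. By the Bihari--Osgood lemma $U$ stays finite on $[0,T]$ because $\int^\infty dr/\Omega(r)=\infty$, a test which the sub-quadratic double-log growth of (\ref{eq:f})--(\ref{eq:h}) is designed to pass.

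The structural subtlety I would flag in this step, and which I expect to be the main obstacle, is the nonlocal boundary coupling: the incoming data $u_i(\mathrm{bdry},t)=h_i(t,v(t))$ depends on $v$ at the \emph{same} time $t$, so a naive estimate produces an un-absorbable term $C_h\,U(t)(\log\log U(t))^{1/4}$ on the right. The resolution is that the components of $v(t)$ are \emph{outgoing} values, determined by characteristics which, since the boundary is non-characteristic ($0<c_1\le|\lambda_i|\le c_2$ on $\overline{\Pi^T}$), strike the opposite boundary a full transit time $\delta=1/c_2$ in the past; thus $v$ satisfies a reflection recursion with strictly positive delay and closes no instantaneous loop. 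I would first bound $\sup_{s\le t}\|v(s)\|$ through the $\lceil T/\delta\rceil$ delayed generations (the multiplicative per-generation factors being only of $\log\log\log$ size, hence harmless), then feed this into the Osgood inequality for $U$ above.

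Finally, the $\Con^1$ bound follows cheaply once $\|u^R\|\le M_T$ is in hand. In the derivative equations (\ref{eq:U_x}) and (\ref{eq:v'}) every occurrence of $\nabla_uf$ and $\nabla_vh$ is now evaluated on $\{\|u\|\le M_T\}$ and so is bounded by the \emph{constants} $C_f(\log\log F(\cdot,M_T))^{1/4}$ and $C_h(\log\log H(\cdot,M_T))^{1/4}$; the equation for $\partial_xu^R$ is therefore linear in $\partial_xu^R$ with bounded coefficients (including the delayed boundary feedback through $\nabla_vh\cdot v'$), so a linear Gronwall estimate gives $\|\partial_xu^R\|\le N_T$ uniformly in $R$, and (\ref{eq:apr11}) then bounds $\|\partial_tu^R\|$. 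All bounds being $R$-independent, the truncation is removed as above and the proof is complete.
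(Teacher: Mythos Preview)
Your truncation-plus-a-priori-bound strategy is sound and is a genuine alternative to the paper's argument, but the two routes diverge at the core step. The paper does \emph{not} set up a Bihari--Osgood inequality. Instead it reuses the explicit discrete estimate (\ref{eq:u}) from the proof of Theorem~\ref{thm:classical}: assuming $\|u\|\le e^R$, the effective Lipschitz constants become $L_f,L_h\lesssim(\log\log S)^{1/4}$ with $S\sim e^{\delta R}$, the step $\theta_0$ becomes $\sim(\log\log S)^{-1/2}$, and one checks algebraically that $\bigl(3+2nL_h\bigr)^{\lceil T/\theta_0\rceil}\Phi\lesssim(\log\log S)^{\frac12(\log\log S)^{1/2}}\le\log S\lesssim R\le e^R/\sqrt n$ for large $R$, so the assumed bound reproduces itself. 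The same calculation with $\Psi$ and (\ref{eq:apr1}) handles $\partial_xu$. This is pure self-consistency; no Osgood criterion is invoked.

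Your route would also close, but not via the single integral inequality you wrote. The boundary term $h_i(t_i,v(t_i))$ contributes $\sim U(t_i)(\log\log U(t_i))^{1/4}$ \emph{outside} the time integral, and since $t_i$ can be arbitrarily close to $t$ this is an un-absorbable superlinear term in $U(t)$; the product $L_f(1+nL_h)$ from $q_0$ governs the paper's \emph{contraction step size}, not a growth rate under an integral, so $\Omega(r)\sim r(\log\log r)^{1/2}$ does not arise in the way you suggest. Your delay observation is the right repair --- with minimum transit time $\delta$ there are at most $N=\lceil T/\delta\rceil$ reflections, each contributing a multiplicative factor $\sim(\log\log U)^{1/4}$ (not ``$\log\log\log$ size''), and after unrolling one gets $U(t)\lesssim(\log\log U)^{N/4}\bigl(\Phi+\int_0^t U(\log\log U)^{1/4}\bigr)$, which for fixed $N$ does yield an $R$-independent bound. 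So your plan works once this recursion is made precise; the paper's route trades this structural argument for a direct numerical check against the iterative estimate already in hand.
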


\begin{proof}
It suffices to prove the theorem in $\Pi^T$ for an arbitrarily fixed $T>0$.
Let us prove that there exists a unique continuous solution to our problem
(in the sense of Definition~\ref{defn:continuous}) such that
\begin{equation}\label{eq:star}
\max\limits_{i,x,t}|u_i|\le e^R/\sqrt{n}
\end{equation}
for all sufficiently large $R>0$. On the account of~(\ref{eq:u}),
we are done if we show that
\begin{multline}\label{eq:final}
\Phi\left[3+2nC_h\left(\log\log\max\limits_{[0,T]\times[0,e^R]}H(t,\|z\|)\right)^{1/4}
\right]^{\lceil T/\theta_0\rceil}\\[1mm]
\le e^R/\sqrt{n},
\end{multline}
where
\begin{eqnarray*}
\displaystyle
\theta_0=\left(2C_fn\right)^{-1}
\left(\log\log\max\limits_{\overline{\Pi^T}\times[0,e^R]}F(x,t,\|y\|)
\right)^{-1/4}
\\
\displaystyle
\times\left(1+nC_h\left(\log\log\max\limits_{[0,T]\times[0,e^R]}H(t,\|z\|)
\right)^{1/4}\right)^{-1}.
\end{eqnarray*}
Let $\sigma$ be the largest maximum absolute value of coefficients of $F$ 
and $H$ in $\overline{\Pi^T}$. Let $\delta$ be the maximum degree of the
polynomials $F$ and $H$. Set
$$
S=\sigma\left(1+e^R\right)^\delta.
$$
It is easy to see that
$$
\max\left\{\max\limits_{\overline{\Pi^T}\times[0,e^R]}F(x,t,\|y\|),
\max\limits_{[0,T]\times[0,e^R]}H(t,\|z\|)\right\}\le S.
$$
Obviously, there exists $R_0>0$ such that for all
$R\ge R_0$ the left hand side of~(\ref{eq:final})
is  bounded from above by
\begin{eqnarray*}
\Phi\left[\log\log S\right]^{1/2\left(\log\log S\right)^{1/2}}.
\end{eqnarray*}
Fix an arbitrary $R\ge R_0$ so that
$$
\Phi\left[(1+\delta)\log(2\sigma)+\delta R\right]\le e^R/\sqrt{n}.
$$
The desired estimate~(\ref{eq:final}) now follows from the inequality
\begin{eqnarray}
\lefteqn{\Phi\left[\log\log S\right]^{1/2\left(\log\log S\right)^{1/2}}\le}
\nonumber\\&&
\le
\Phi\exp\left\{1/2\log\left(\log\log S\right)\left(\log\log S\right)^{1/2}\right\}
\nonumber\\&&
=\Phi\exp\left\{\log\left(\log\log S\right)^{1/2}\left(\log\log S\right)^{1/2}\right\}
\nonumber\\&&
\le\Phi\exp\left\{\log\log S\right\}
=\Phi\log S
\le
\Phi\log\left(\sigma(1+e^R)^\delta\right)\nonumber\\&&
=
\Phi\left[(1+\delta)\log(2\sigma)+\delta R\right]
\le e^R/\sqrt{n}.\label{eq:calcul}
\end{eqnarray}
The existence and the uniqueness of a continuous solution
satisfying the bound (\ref{eq:star}) is therewith proved.
This gives us the unconditional existence and, since $R\ge R_0$
is arbitrary, we have also the unconditional uniqueness.

To prove that the solution is a
$\left[\Con_{x,t}^{1,0}(\Pi^T)\right]^n$-function, we apply a similar argument, but now 
use the global a priori estimate~(\ref{eq:apr1}). It suffices to show that for 
some $Q>0$ there is a unique continuous function $\partial_xu$ with
$$
\max\limits_{i,x,t}|\partial_xu_i|\le e^Q/\sqrt{n}.
$$
Fix $P\ge R_0$ and set up the constant $M$ introduced by Convention
in Section \ref{s:2} to $M=e^P/\sqrt{n}$.
Notice the existence of a constant $Q_0>0$ such that
for all $Q\ge Q_0$ the right hand side of~(\ref{eq:apr1}) is bounded from above by
\begin{eqnarray*}
\Psi\left[\log\log S\right]^{1/2\left(\log\log S\right)^{1/2}}
\end{eqnarray*}
and choose  $Q\ge Q_0$ satisfying the inequality
$$
\Psi\left[(1+\delta)\log(2\sigma)+\delta Q\right]\le e^Q/\sqrt{n}.
$$
To finish the proof of the $\left[\Con_{x,t}^{1,0}(\Pi^T)\right]^n$-smoothness,
it remains to apply the calculation~(\ref{eq:calcul}) with $\Psi$ in place of $\Phi$.

By~(\ref{eq:1}), $u$ is a $\left[\Con^1(\Pi^T)\right]^n$-function.
The proof is complete.
\end{proof}

\begin{remark}\rm
To prove the uniqueness part of Theorem~\ref{thm:nonL}, one can also run a
standard argument. Let $u$ and $w$ be two classical solutions to the
problem (\ref{eq:1})--(\ref{eq:3}). Then $u-w$ satisfies 
the sytem
\begin{eqnarray*}
&&(\partial_t  + \Lambda(x,t)\partial_x) u =\\ 
&&\qquad\quad=(u-w)\cdot
\int\limits_0^1\nabla_uf\left(x,t,\sigma u+(1-\sigma)w\right)\,d\sigma,\\
&&u(x,0) = 0,\\
&&u_i(0,t) = (v-\tilde v)\cdot\int\limits_0^1
\nabla_vh_i\left(x,t,\sigma v+(1-\sigma)\tilde v\right)\,d\sigma,\\ 
&&\qquad\qquad\qquad\qquad\qquad\qquad\qquad k+1\le i\le n;\\
&&u_i(1,t) = (v-\tilde v)\cdot\int\limits_0^1
\nabla_vh_i\left(x,t,\sigma v+(1-\sigma)\tilde v\right)\,d\sigma,\\
&&\qquad\qquad\qquad\qquad\qquad\qquad\qquad 1\le i\le k.
\end{eqnarray*}
 Here
$$\tilde v(t)=(w_1(0,t),\dots,w_{k}(0,t),w_{k+1}(1,t),\dots,w_{n}(1,t)).$$ Since
$\nabla_uf(x,t,\sigma u+(1-\sigma)w)$ and 
$\nabla_vh(x,t,\sigma v+(1-\sigma)\tilde v)$ are known continuous functions,
the uniqueness now follows from an analog of (\ref{eq:u}) for the 
difference $u-w$.
\end{remark}

\begin{remark}\label{rem}\rm
Assume that all conditions excluding~(\ref{eq:f}) and~(\ref{eq:h})
of Theorem~\ref{thm:nonL} are fulfilled. Furthermore, assume that
$\nabla_zh(t,z)$ is
bounded on $K\times\R^n$
for every compact $K\subset[0,\infty)$ and
for each $T>0$ there exists $C_f>0$ such that
$$
\left\|\nabla_yf(x,t,y)\right\|\le C_f\log\log F(x,t,\|y\|),
$$
where $F$ is a polynomial as in Theorem~\ref{thm:nonL}. Then, using similar
argument as in the proof of Theorem~\ref{thm:nonL}, one can easily prove that
the problem~(\ref{eq:1})--(\ref{eq:3}) has a unique
classical solution in $\Pi$.
\end{remark}

\bibliographystyle{apalike}

\end{document}